\theoremstyle{plain}
\newtheorem{theorem}{Theorem}[section]
\theoremstyle{definition}
\newtheorem{definition}[theorem]{Definition}
\newcommand{\binomial}[2]{\left(\begin{array}{c}#1\\#2\end{array}\right)}
\newcommand{\ifc}{{\rm if \ }}
\newcommand{\boxtensor}{{\Box\kern-9.03pt\raise1.42pt\hbox{$\times$}}}
\newcommand{\sF}{{\mathcal F}}
\newcommand{\F}{{\mathbb{F}}}
\newcommand{\be}{\begin{eqnarray}}
\newcommand{\ee}{\end{eqnarray}}
\newcommand{\nn}{{\nonumber}}
\newcommand{\dd}{\displaystyle}
\begin{document}

\title[Family complexity and cross-correlation measure]{Family complexity and cross-correlation measure for families of binary sequences}
\subjclass[2010]{11K45, 11T24} \keywords{pseudorandomness, binary sequences, family complexity, cross-correlation measure, Legendre sequence, polynomials over finite fields}

\author[A.\ Winterhof]{Arne Winterhof}
\author[O.\ Yayla]{O\u{g}uz Yayla}

\address[Arne Winterhof, O\u{g}uz Yayla]{Johann Radon Institute for Computational and Applied Mathematics (RICAM)\\
Austrian Academy of Sciences\\
Altenbergerstr. 69\\
4040 Linz\\Austria}
\email{arne.winterhof@ricam.oeaw.ac.at}
\email{oguz.yayla@ricam.oeaw.ac.at}

\begin{abstract}
We study the relationship between two measures of pseudorandomness for families of binary sequences: family complexity and cross-correlation measure introduced by Ahlswede et al.\ in 2003 and 
recently by Gyarmati et al., respectively. 
More precisely, we estimate the family complexity of a family $(e_{i,1},\ldots,e_{i,N})\in \{-1,+1\}^N$, $i=1,\ldots,F$, of binary sequences of length $N$ in terms of the   
cross-correlation measure of its dual family $(e_{1,n},\ldots,e_{F,n})\in \{-1,+1\}^F$, $n=1,\ldots,N$. 

We apply this result to the family of sequences of 
Legendre symbols with irreducible quadratic polynomials modulo $p$ 
with middle coefficient $0$, that is, 
$e_{i,n}=\left(\frac{n^2-bi^2}{p}\right)_{n=1}^{(p-1)/2}$ for $i=1,\ldots,(p-1)/2$, where $b$ is a quadratic nonresidue modulo $p$, showing that this family as well as its dual family have
both a large family complexity and a small cross-correlation measure up to a rather large order. 
\end{abstract}

\maketitle

\section{Introduction}

Pseudorandom binary sequences are used in many areas such as telecommunication, cryptography, simulation, spectroscopy, see for example \cite{GG2005,Gya2013,TW2007}. 
The quality of a pseudorandom sequence has to be screened by statistical test packages 
(for example L'Ecuyer's TESTU01, Marsaglia's Diehard or the NIST battery) as well as by proving theoretical results on certain measures of pseudorandomness such as the 
correlation measure of order $\ell$ introduced by Mauduit and S\'ark\"ozy \cite{MS1997}. Here we focus on theoretical results.

In many applications such as cryptography one needs a large family of good pseudorandom sequences and has to prove bounds on several figures of merit \cite{Gya2013,GMS2014}. 
In this paper we study the relationship of two such quality measures, the family complexity, short $f$-complexity, and the cross-correlation measure of order $\ell$ of families of binary sequences. 

Ahlswede et al.~\cite{AKMS2003} introduced the $f$-complexity as follows.

\begin{definition}
The \textit{$f$-complexity} $C(\sF)$ of a family $\sF$ of binary sequences $E_N \in \{-1,+1\}^N$ of length $N$ 
is the greatest integer $j \geq 0$ such that for any $1 \leq i_1 < i_2< \cdots < i_j \leq N$ and any $\epsilon_1,\epsilon_2, \ldots,  \epsilon_j \in \{-1,+1\}$ 
there is a sequence $E_N = \{e_1,e_2,\ldots , e_N\}\in \sF$ with $$e_{i_1}=\epsilon_1,e_{i_2}=\epsilon_2, \ldots ,e_{i_j}=\epsilon_j.$$
\end{definition}

We have the trivial upper bound 
\be \label{eqn.bound_f}
2^{C(\sF)} \leq |\sF|,
\ee
where $|\sF|$ denotes the size of the family $\sF$.

Gyarmati et al.~\cite{GMS2014} introduced the cross-correlation measure of order $\ell$.


\begin{definition} \label{def.ccm}
The \textit{cross-correlation measure of order $\ell$} of a family $\sF$ of binary sequences $E_{i,N} = (e_{i,1},e_{i,2},\ldots , e_{i,N}) \in \{-1+1\}^N$,  $i=1,2, \ldots , F$, 
is defined as 
$$
\Phi_\ell(\sF) = \max_{M,D,I}\left| \sum_{n=1}^{M}{e_{i_1,n+d_1} \cdots e_{i_\ell,n+d_\ell}}\right|,
$$
where $D$ denotes an $\ell$ tuple $(d_1,d_2,\ldots , d_\ell)$ of integers such that $0 \leq d_1 \leq d_2 \leq \cdots \leq d_\ell < M+d_\ell \leq N$ and $d_i \neq d_j$ 
if $E_{i,N} = E_{j,N}$ for $i \neq j$, and $I$ denotes an $\ell$ tuple $(i_1,i_2, \ldots , i_\ell)\in\{1,2,\ldots ,F\}^\ell$.
\end{definition}

In Section \ref{sec.ccm} we estimate the $f$-complexity of a family 
of binary sequences 
$$E_{i,N} = (e_{i,1},e_{i,2},\ldots , e_{i,N}) \in \{-1+1\}^N, \quad i=1,\ldots,F,$$
in terms of the cross-correlation measure of the {\em dual family} $\overline{\sF}$ of binary sequences 
$$E_{i,F} = (e_{1,i},e_{2,i},\ldots , e_{F,i}) \in \{-1+1\}^{F}, \quad i=1,\ldots,N.$$ 

In Section~\ref{sec.example} we apply this result to prove a bound on the $f$-complexity of the family of sequences of Legendre-symbols of monic quadratic irreducible polynomials 
with vanishing middle coefficient
$$\sF=\left\{\left(\frac{n^2-bi^2}{p}\right)_{n=1}^{(p-1)/2} : i=1,\ldots,(p-1)/2\right\},$$ 
where $b$ is a quadratic non-residue modulo $p$, 
showing that this family as well as its dual family have
both a large family complexity and a small cross-correlation measure up to a rather large order. 

We note that there are several related constructions of families of binary sequences defined with the Legendre symbol and polynomials, see \cite{Gya2009, GMS2014} and references therein. For instance, the family given in \cite{Gya2009} has a large family complexity but a large cross-correlation measure of order 2. Moreover, the families given in \cite{GMS2014}  have a small cross-correlation measure but it is not easy to measure their family complexity, for further details see the remarks in Section \ref{sec.example}. One of the aims of this study is to construct a family of binary sequences having both a large family complexity and a small cross-correlation measure. 

Throughout the paper, the notation $U \ll V$ is equivalent to the statement that $\vert U \vert \leq cV$ holds with some positive constant $c$. Moreover, the notation $f(n) = o(1)$ is equivalent to $$\lim_{n \to \infty}{f(n)} = 0.$$ 

\section{A relation between family complexity and cross-correlation measure}\label{sec.ccm}
In this section we prove the following relationship between the $f$-complexity of a family of binary sequences and the cross-correlation measure of its dual family.

\begin{theorem}
\label{thm.f_ccm}
Let $\sF$ be a family of binary sequences $(e_{k,1},\ldots ,e_{k,N}) \in \{-1,+1\}^N$ for $k=1,2,\ldots ,F$ and $\overline{\sF}$ its dual family of binary sequences 
$(e_{1,n},e_{2,n},\ldots ,e_{F,n}) \in \{-1,+1\}^{F}$ for $n=1,2,\ldots ,N$. Then we have
$$
C(\sF) \geq \left\lceil \log_2{F} - \log_2{\max_{1 \leq i \leq \log_2{F}}{\Phi_{i}(\overline{\sF})}} \right\rceil -1,
$$
where $\log_2$ denotes the binary logarithm.
\end{theorem}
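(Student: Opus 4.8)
The plan is to show that if $C(\sF) = j$ fails to meet the stated bound, then some cross-correlation sum $\Phi_i(\overline{\sF})$ with $i \le \log_2 F$ must be large, which is the desired contrapositive. So suppose $C(\sF) < j$ for some target value $j$; then there exist indices $1 \le n_1 < \cdots < n_j \le N$ and signs $\epsilon_1,\ldots,\epsilon_j \in \{-1,+1\}$ such that \emph{no} sequence $(e_{k,1},\ldots,e_{k,N}) \in \sF$ simultaneously satisfies $e_{k,n_1}=\epsilon_1,\ldots,e_{k,n_j}=\epsilon_j$. The key observation is to translate this into a statement about the dual family: for each $k \in \{1,\ldots,F\}$ the dual sequence $E_{k,F}$ — wait, more precisely, for each $k$ the column vector $(e_{k,n_1},\ldots,e_{k,n_j})$ avoids the pattern $(\epsilon_1,\ldots,\epsilon_j)$. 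Equivalently, looking at the dual sequences $E_{n_t,F} = (e_{1,n_t},\ldots,e_{F,n_t})$ for $t=1,\ldots,j$, the forbidden pattern says that the $F$ points $\big((e_{k,n_1},\ldots,e_{k,n_j})\big)_{k=1}^{F}$ in $\{-1,+1\}^j$ never equal $(\epsilon_1,\ldots,\epsilon_j)$.

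Next I would convert ``avoiding a pattern'' into a character-sum (indicator) identity. For a fixed sign pattern $(\epsilon_1,\ldots,\epsilon_j)$, the indicator that $(x_1,\ldots,x_j) = (\epsilon_1,\ldots,\epsilon_j)$ for $x_t \in \{-1,+1\}$ is $\prod_{t=1}^{j}\frac{1+\epsilon_t x_t}{2} = 2^{-j}\prod_{t=1}^{j}(1+\epsilon_t x_t)$. Expanding the product, summing over $k=1,\ldots,F$ with $x_t = e_{k,n_t}$, and using that the total is $0$ (the pattern is forbidden), I get
$$
0 = \sum_{k=1}^{F}\prod_{t=1}^{j}(1+\epsilon_t e_{k,n_t}) = \sum_{S \subseteq \{1,\ldots,j\}} \Big(\prod_{t \in S}\epsilon_t\Big) \sum_{k=1}^{F}\prod_{t \in S} e_{k,n_t}.
$$
The term $S = \emptyset$ contributes $F$. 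Each nonempty $S = \{t_1 < \cdots < t_i\}$ contributes (up to the sign $\prod_{t\in S}\epsilon_t = \pm1$) the sum $\sum_{k=1}^{F} e_{k,n_{t_1}}\cdots e_{k,n_{t_i}}$, which is exactly a cross-correlation sum of order $i$ for the dual family $\overline{\sF}$ with shift tuple $(n_{t_1},\ldots,n_{t_i})$ — note the indices $n_{t_1} < \cdots < n_{t_i}$ are strictly increasing, hence distinct, so this is an admissible configuration in Definition~\ref{def.ccm} and is bounded in absolute value by $\Phi_i(\overline{\sF}) \le \max_{1\le i \le j}\Phi_i(\overline{\sF})$ (and since $j \le \log_2 F$, by $\max_{1 \le i \le \log_2 F}\Phi_i(\overline{\sF})$). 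Therefore
$$
F = \Big|\sum_{\emptyset \ne S \subseteq \{1,\ldots,j\}}(\pm 1)\sum_{k=1}^F \prod_{t\in S} e_{k,n_t}\Big| \le (2^j - 1)\max_{1 \le i \le \log_2 F}\Phi_i(\overline{\sF}) < 2^j \max_{1 \le i \le \log_2 F}\Phi_i(\overline{\sF}).
$$

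From $F < 2^j\,\Phi$ where $\Phi := \max_{1\le i\le\log_2 F}\Phi_i(\overline{\sF})$, taking binary logarithms gives $j > \log_2 F - \log_2 \Phi$. We have shown: if $j > \log_2 F - \log_2 \Phi$ is violated, i.e.\ $j \le \log_2 F - \log_2 \Phi$, then no forbidden pattern of length $j$ exists, so $C(\sF) \ge j$. Taking $j = \lceil \log_2 F - \log_2 \Phi\rceil$ would be the natural choice, but one must be slightly careful: the argument above requires $j \le \log_2 F$ so that $\max_{1\le i\le j}\Phi_i \le \max_{1\le i\le\log_2 F}\Phi_i$, and it used the strict inequality $2^j - 1 < 2^j$; reconciling the ceiling with these constraints is what forces the ``$-1$'' in the statement. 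Concretely, set $j = \lceil \log_2 F - \log_2 \Phi \rceil - 1$; then $j \le \log_2 F - \log_2 \Phi$, hence $2^j \le F/\Phi$, hence $(2^j-1)\Phi < F$, so the chain of inequalities shows the pattern cannot be forbidden, giving $C(\sF) \ge j$ as claimed (with the minor check that $j \le \log_2 F$, which holds because $\Phi \ge 1$ whenever the family is nontrivial).

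The main obstacle is not any single estimate — each step is elementary — but rather the bookkeeping: correctly setting up the indicator-function expansion so that the off-diagonal terms are genuinely admissible cross-correlation sums (distinct shifts, correct range $M = F$, order $i \le j$), and then tracking the ceilings and the $2^j$ versus $2^j - 1$ discrepancy carefully enough to land exactly on the stated bound with its ``$-1$''. I expect the cleanest writeup to fix $j := \lceil \log_2 F - \log_2 \Phi\rceil - 1$ at the outset and verify directly that any length-$j$ sign pattern is realized.
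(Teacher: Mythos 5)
Your proposal is correct and is essentially the paper's own argument presented in contrapositive form: you expand the same indicator product $2^{-j}\prod_t(1+\epsilon_t e_{k,n_t})$, identify the nonempty-subset terms as order-$i$ cross-correlation sums of the dual family, and bound their total by $2^j\max_{1\le i\le \log_2 F}\Phi_i(\overline{\sF})$ using $j\le\log_2 F$, exactly as the paper does. The only cosmetic differences are the direction of the implication (forbidden pattern $\Rightarrow$ $F<2^j\Phi$, versus the paper's $F>\sum_\ell\binom{j}{\ell}\Phi_\ell \Rightarrow A>0$) and your marginally sharper constant $2^j-1$, neither of which changes the substance.
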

\begin{proof}
We are looking for the largest $j$ such that any specification 
\be \label{eqn.spec}
e_{k,n_1}=\epsilon_1,e_{k,n_2}=\epsilon_2,\ldots ,e_{k,n_j}=\epsilon_j
\ee
occurs in the family $\sF$ for some $k \in \{1,2,\ldots ,F\}$.
We will prove a sufficient condition for the existence of a sequence in $\sF$ satisfying (\ref{eqn.spec}) by a counting argument. 
We know that
$$\frac{1}{2}(1+\epsilon_ie_{k,n_i}) = \left\lbrace \begin{array}{ll}1 & \ifc e_{k,n_i}=\epsilon_i,\\
0 & \ifc e_{k,n_i}=-\epsilon_i.
\end{array}
\right.$$
Then the number $A$ of sequences in $\sF$ satisfying (\ref{eqn.spec}) equals 
\be \label{eqn.number} \nn
A = \sum_{k=1}^{F}{\frac{1}{2^j} \prod_{i=1}^{j}{(1+\epsilon_ie_{k,n_i})}}.
\ee
Extending the product and after easy calculations we have
\be \nn
\begin{array}{lll}
A 
&=&\dd \frac{1}{2^j}\sum_{k=1}^{F}{\left[1+ \sum_{{\ell}=1}^{j}{\sum_{1 \leq  i_1 < \cdots < i_{\ell} \leq j}{\epsilon_{i_1}\cdots \epsilon_{i_{\ell}}e_{k,n_{i_1}} \cdots e_{k,n_{i_{\ell}}}}}\right]}\\[.6cm]&=&\dd \frac{1}{2^j}\left[F+ \sum_{{\ell}=1}^{j}{\sum_{1 \leq i_1 < \cdots < i_{\ell} \leq j}{\epsilon_{i_1}\cdots \epsilon_{i_{\ell}} \sum_{k=1}^{F}{e_{k,n_{i_1}} \cdots e_{k,n_{i_{\ell}}}}}} \right]\\[.6cm]
&\geq &\dd \frac{1}{2^j}\left[F- \sum_{{\ell}=1}^{j}{\sum_{1 \leq i_1 < \cdots < i_{\ell} \leq j}{\left|\sum_{k=1}^{F}{e_{k,n_{i_1}} \cdots e_{k,n_{i_{\ell}}}}\right|}} \right]\\[.6cm]
&\geq &
\dd \frac{1}{2^j}\left[F- \sum_{{\ell}=1}^{j}{\binomial{j}{{\ell}}{\Phi_{\ell}(\overline{\sF})}} \right].
\end{array}
\ee
Thus if 
\be \label{eqn.j2}
\dd F > \sum_{{\ell}=1}^{j}{\binomial{j}{{\ell}}{\Phi_{\ell}(\overline{\sF})}},
\ee
then there exists at least one sequence in $\sF$ satisfying (\ref{eqn.spec}). 

By (\ref{eqn.bound_f}) we may assume $j \leq \log_2F$, and so we get
\be \nn
\begin{array}{lll}
\dd \sum\limits_{\ell=1}^{j}{\binomial{j}{{\ell}}{\Phi_{\ell}(\overline{\sF})}} & \leq &   
 2^j \max\limits_{1 \leq i \leq \log_2 F}\Phi_{i}(\overline{\sF}).
\end{array}
\ee
Therefore for all integers $j \geq 0$ satisfying $$ j < \log_2{F} - \log_2{\max_{1 \leq \ell \leq \log_2{F}}{\Phi_{\ell}(\overline{\sF})}}$$
 (\ref{eqn.j2}) is satisfied and we have $A > 0$ which completes the proof.
\end{proof}

\section{A family with low cross-correlation measure and high family complexity}\label{sec.example}
In this section we demonstrate how to apply Theorem~\ref{thm.f_ccm} and prove that the following family of sequences and its dual family have both high family complexity and small cross-correlation
measure of order $\ell$ up to a large order $\ell$. 



Let $p>2$ be a prime and $b$ a quadratic nonresidue modulo $p$. 
The family $\sF$ and its dual family $\overline{\sF}$ are  defined as follows:
 $${\sF}=\left\{ \left(\frac{n^2-bi^2}{p}\right)_{i=1}^{(p-1)/2} : n=1,\ldots,(p-1)/2\right\},$$
 $$\overline{\sF}=\left\{ \left(\frac{n^2-bi^2}{p}\right)_{n=1}^{(p-1)/2} : i=1,\ldots,(p-1)/2\right\}.$$
 Since $\left(\frac{n^2-bi^2}{p}\right)=\left(\frac{-b}{p}\right)\left(\frac{i^2-bn^2}{p}\right)=(-1)^{(p+1)/2}\left(\frac{i^2-bn^2}{p}\right)$
 both families have the same family complexity and cross-correlation measure of order $\ell$. 
We will show
\begin{equation}\label{Ql}
 Q_\ell({\sF})\ll \ell p^{1/2}\log p\quad\mbox{ and }\quad
Q_\ell(\overline{\sF})\ll \ell p^{1/2}\log p
\end{equation}
for each integer $\ell=1,2,\ldots$ 
Then Theorem~\ref{thm.f_ccm} immediately implies
$$C({\sF})\ge \left(\frac{1}{2}-o(1)\right) \frac{\log p}{\log 2} \quad \mbox{and}\quad
C(\overline{\sF})\ge \left(\frac{1}{2}-o(1)\right) \frac{\log p}{\log 2}.$$
Here we used 
$$|\sF|=|\overline{\sF}|=(p-1)/2,\quad p\ge 11$$
which can be verified in the following way.
For some $1\le n_1<n_2\le (p-1)/2$ assume that $\left(\frac{n_1^2-bi^2}{p}\right)=\left(\frac{n_2^2-bi^2}{p}\right)$ for $i=1,\ldots,(p-1)/2$.
Since $i^2\equiv (p-i)^2 \bmod p$ and $\left(\frac{n_1^2}{p}\right)=\left(\frac{n_2^2}{p}\right)=1$ these Legendre symbols are the same for all $i=0,\ldots,p-1$
and thus
$$p=\sum_{i=0}^{p-1} \left(\frac{n_1^2-bi^2}{p}\right)\left(\frac{n_2^2-bi^2}{p}\right)\le 3p^{1/2}$$
by Weil's bound and we get a contradiction to $p\ge 11$. 

Now we prove (\ref{Ql}).
According to Definition \ref{def.ccm} and since the Legendre symbol is multiplicative we need to estimate sums of the form
$$
\dd \left \lvert\sum_{n=1}^M\left( \frac{h(n)}{p} \right)\right\rvert$$
where 
$$h(X)=((X+d_1)^2-bi_1^2)((X+d_2)^2-bi_2^2)\cdots ((X+d_\ell)^2-bi_\ell^2).$$ 
The result follows from the Weil bound after reducing these incomplete character sums to complete ones using the standard method provided that $h(X)$ is not a square, 
see \cite{iwko,ti,wi}. 
Since each factor of $h(X)$ of the form $(X+d)^2-bi^2$ is irreducible over $\F_p$, it is enough to show that one factor is distinct from all the others.
We may assume that the first factor is of the form $X^2-bi^2$. Comparing coefficients we see that
$$X^2-bi^2=(X+d)^2-bj^2$$ 
is only possible if $d=0$ and $i\equiv \pm j\bmod p$. Since $1\le i,j\le (p-1)/2$ we get $i=j$ and $h(X)$ is not a square since either $d_j\ne d_k$ for $i\ne k$ or $d_j=d_k$ and $i_j\ne i_k$
with $1\le i_j,i_k\le (p-1)/2$. 

\subsubsection*{Remarks}
\begin{enumerate}
\item[1.] Gyarmati \cite{Gya2009} studied the family $\sF$ of binary sequences $E_f = (e_{f,1}, \ldots, e_{f,p}) \in \{-1, +1\}^p$ defined by 
\be \label{Leg_seq}
e_{f,n} = \left\lbrace \begin{array}{ll}\left( \frac{f(n)}{p} \right) & \ifc \gcd(f(n),p) =1,\\
\quad 1 & \ifc p \mid f(n),
\end{array}
\right.
\ee
where $f$ runs through all non-constant square-free polynomials over $\F_p$ of degree at most $k$. She proved 
$$C(F) \geq \frac{K}{2\log{2}}\log{p} - O(k \log{(k \log{p})}).$$
However, this family has obviously a very large cross-correlation measure of order 2. (Compare $E_f$ and $E_g$ with $g(X) = f(X+d)$ for some $d \in \F_p^*$.)

\item[2.] Gyarmati, Mauduit, and S\'arközy \cite{GMS2004} studied the family $\sF$ of sequences $E_f = (e_{f,1}, \ldots, e_{f,p}) \in \{-1, +1\}^p$ defined by \eqref{Leg_seq} where $f$ runs through all monic irreducible polynomials of degree d with second leading coefficient 0 and showed in Theorem 8.14 that 
$$\Phi_l(\sF) \ll \ell d p^{1/2}\log{p}.$$
We note that for $d=2$ the polynomials $f$ are of the form $X^2 - a$ where $a$ is a quadratic non-residue modulo $p$ and we have 
$e_{f,1} = e_{f,p-1}$ and thus $C(\sF) \leq 1$. For $d \geq 3$ it seems to be a challenging problem to estimate the family complexity of this family.

\item[3.] Very recently, Gyarmati \cite{Gya2014ffa} proved also a lower bound on the family complexity of the family of sequences $E_f$ defined by \eqref{Leg_seq} where $f$ runs through \textit{all} monic irreducible polynomials of degree $d$ (with arbitrary second leading coefficient). However, since $f(X)$ is irreducible whenever $f(X+c)$ for all $c \in \F_p$ the cross-correlation measure of order 2 of this family is again very large.
\end{enumerate}

\section*{Acknowledgment}
The first author is supported by the Austrian Science Fund (FWF): Project F5511-N26 
which is part of the Special Research Program "Quasi-Monte Carlo Methods: Theory and Applications".
The second author is supported by T\"{U}B\.{I}TAK under Grant No.\ 2219. 

\bibliographystyle{amsplain}
\bibliography{family_complexity}

\end{document}